\newtheorem{theorem}{Theorem}[section]
\newtheorem{corollary}{Corollary}[section]
\newtheorem{lemma}[theorem]{Lemma}
\providecommand{\keywords}[1]{\textbf{\textit{Keywords---}} #1}
\title{A Linear Algorithm for Computing Independence Polynomials of Trees}
\author{
        Ohr Kadrawi \\
        Department of Mathematics\\
        Ariel University\\
        Ariel 4070000, \underline{Israel}\\
        orka@ariel.ac.il
        \and
        Vadim E. Levit \\
        Department of Mathematics\\
        Ariel University\\
        Ariel 4070000, \underline{Israel}\\
        levitv@ariel.ac.il
        \and
        Ron Yosef \\
        Department of Computer Science\\
        The Hebrew University of Jerusalem\\
        Jerusalem 9190401, \underline{Israel}\\
        ron.yosef@mail.huji.ac.il\\
        \and
        Matan Mizrachi \\
        Department of Computer Science\\
        HIT-Holon Institute of Technology\\
        Holon, Golomb 52, \underline{Israel}\\
        matanmi@my.hit.ac.il
        }
\begin{document}

\maketitle

\begin{abstract}
    An independent set in a graph is a set of pairwise non-adjacent vertices. Let $\alpha(G)$ denote the cardinality of a maximum independent set in the graph $G = (V, E)$. Gutman and Harary defined the independence polynomial of $G$
    \[
    I(G;x) = \sum_{k=0}^{\alpha(G)}{s_k}x^{k}={s_0}+{s_1}x+{s_2}x^{2}+...+{s_{\alpha(G)}}x^{\alpha(G)},
    \]
    where $s_k$ denotes the number of independent sets of cardinality $k$ in the graph $G$ \cite{GH}. A comprehensive survey on the subject is due to Levit and Mandrescu \cite{LM}, where some recursive formulas are allowing to calculate the independence polynomial. A direct implementation of these recursions does not bring about an efficient algorithm. Yosef, Mizrachi, and Kadrawi developed an efficient way for computing the independence polynomials of trees with $n$ vertices, such that a database containing all of the independence polynomials of all the trees with up to $n-1$ vertices is required \cite{YMK}. This approach is not suitable for big trees, as an extensive database is needed. On the other hand, using dynamic programming, it is possible to develop an efficient algorithm that prevents repeated calculations. In summary, our dynamic programming algorithm runs over a tree in linear time and does not depend on a database.
\end{abstract}
\keywords{
independent set, independence polynomial, tree decomposition, dynamic programming, post-order-traversal.
}

\section{Introduction}
\subsection{Definitions}
Throughout this paper $G = (V, E)$ is a simple (i.e., a finite, connected, undirected, loopless and without multiple edges) graph with vertex set $V = V(G)$ of cardinality $|V(G)| = n(G)$ and edge set $E = E(G)$ of cardinality $|E(G)| = m(G)$.\\

A set $S \in V(G)$ is \textit{independent} if no two vertices in $S$ are adjacent. An independent set of maximum cardinality, named a \textit{maximum independent set}, and its size is called the \textit{independence number} and denoted $\alpha(G)$. An \textit{independence polynomial} of graph $G$ defined by Gutman and Harary in 1983 \cite{GH}:
\begin{equation}
    \label{eq:independent-polynomial}
    I(G;x) = \sum_{k=0}^{\alpha(G)}{s_k}x^{k}={s_0}+{s_1}x+{s_2}x^{2}+...+{s_{\alpha(G)}}x^{\alpha(G)}
\end{equation}

where $s_k$ denotes the number of independent sets of cardinality $k$ in the graph $G$. $s_0=1$ is the number of independent sets with minimal cardinality in $G$ (i.e empty set) \cite{LMWCT}.\\

A \textit{tree decomposition} of a general graph $G(V, E)$ is a tree $T$ of "bags", where if edge $(u, v)\in E(G) $ then $u$ and $v$ are together in same "bag", and $\forall u \in V(G)$ the “bags” containing $u$ are connected in $T$. The \textit{width} of a tree decomposition equal to one less than the maximum bag size and the \textit{treewidth} of $G(V, E)$ equal to least width of all tree decompositions for $G$.\\

\subsection{Computing the independence polynomial}

To compute the \emph{independence polynomials} $I(G; x)$, as shown in Levit and Mandrescu survey \cite{LM} one can use the following recursive formula:
        \begin{equation}
        \label{eq:independent-polynomial-rec}
            I(G; x) = I(G - v;x) + x \cdot I(G-N[v]; x)
        \end{equation}
        where N[v] is the neighborhood of the vertex v including v itself. \\

To compute the \emph{independence polynomials} of union of graphs the formula is \cite{CX, LM}:
        \begin{equation}
        \label{eq:independent-polynomial-graphs}
            I(G_1 \cup G_2; x) = I(G_1;x) \cdot I(G_2;x)
        \end{equation}

\subsection{Problem}
Some facts on similar problems:
\begin{itemize}
    \item Computing the \textbf{Maximum Independent Set} of a general graph is an NP-Complete problem.
    \item Computing the \textbf{Independence Polynomial} of a general graph is also an NP-Complete problem. The degree of the independence polynomial is equivalent to the Maximum Independent Set.
    \item There are families of graphs with a closed-form expression for their independence polynomial \cite{Fer}, but from what is known, trees are not one of them.
    \item Some families of graphs have a polynomial algorithm to compute their $\alpha(G)$. Does it give us hope for a polynomial algorithm to compute their independence polynomials?
\end{itemize}

According to Bodlaender \cite{Bod}, many practical problems rely heavily on graphs with bounded treewidth.
Tittmann \cite{Titt} offers a way for the construction of an algorithm that computes the independence polynomial of a graph with bounded treewidth in polynomial time based on a specific partition.\\
Based on Tittmann, who described in acronyms how an algorithm for graphs with a bounded treewidth should look, we accurately investigated an actual algorithm for trees.\\

Yosef, Mizrachi, and K. developed an efficient approach for computing the independence polynomials of trees with $n$ vertices \cite{YMK}. This approach is based on an extensive database containing the independence polynomials of all the trees with up to $n-1$ vertices, and the induction step computing the independence polynomials of all the trees with $n$ vertices based on their $n-1$ counterparts. Although this approach works well with small-sized trees, big trees may require an enormous database to compute their independence polynomials.

\section{Main idea of algorithm}
    The new algorithm is not based on a database nor requires one. Instead, it used the dynamic programming
    technique.\\

    The independence polynomial $I(G; x)$ represented in the algorithm as a list with $\alpha(G) + 1$ cells in the following format:
        \[[s_{k=\alpha(G)}, s_{k=\alpha(G)-1}, ... , s_{k=1}, s_{k=0}]\]

    \begin{table}[ht!]
    \centering
        \begin{tabular}{||c c c||}
            \hline
            Graph & $I(G;x)$ & Stored as \\ [0.5ex]
            \hline\hline
            $P_1$ & $x+1$ & $[1, 1]$ \\
            $P_2$ & $2x+1$ & $[2, 1]$ \\
            $P_3$ & $x^2+3x+1$ & $[1, 3, 1]$ \\[1ex]
            \hline
        \end{tabular}
        \caption{Path graphs with 1,2 and 3 vertices and their representation in the algorithm.}
        \label{ExmpleOfStore}
    \end{table}

    From the computing process, the independence polynomial formula, as described in (\ref{eq:independent-polynomial-rec}), says that every vertex can be calculated just after its children and grandchildren are calculated. Post-order-traversal validates that the children and grandchildren vertices will be computed before the father vertex.\\

    To examine all tree graphs, we divided them into three cases:\\
    \begin{enumerate}
        \item \textbf{$|V| = 1$}:
        \begin{itemize}
            \item  $I(G - v;x) = [1]$ because $G-v$ has no vertices in $G-v$, there is one independent set of cardinality zero (i.e., empty set).
            \item $I(G-N[v]; x) = [1]$ from the same reason above. so:
        \end{itemize}
        \begin{center}
            $I(G; x) = [1] + [1, 0] \cdot [1] = [1, 1]$
        \end{center}

        \item \textbf{$|V| = 2$}: \begin{itemize}
            \item  $I(G - v;x) = [1, 1]$ because when vertex $v$ removed, the graph stay with only one vertex, and this sub-graph matches the previous case.
            \item  $I(G-N[v]; x) = [1]$ because $G-N[v]$ has no vertices in $G-v$, there is one independent set of cardinality zero (i.e., empty set). so:
        \end{itemize}
        \begin{center}
            $I(G; x) = [1, 1] + [1, 0] \cdot [1] = [2, 1]$
        \end{center}

        \item \textbf{$|V| > 2$}:
        \begin{enumerate}
        \item Start with traveling on the tree in post-order traversal. When reaching a leaf node, like case 1, it can calculate by A:
        \begin{itemize}
            \item $I(G - v;x) = [1]$
            \item $x \cdot I(G-N[v]; x) = [1, 0]$
        \end{itemize}
        \begin{center}
            $I(G; x) = [1, 1] $
        \end{center}
        \item For an inner vertex that all its children were calculated, when vertex v is removed, the number of connected components can rise, and in such case, computation of sub-graphs union, as described in  (\ref{eq:independent-polynomial-graphs}), is needed.\\

        So in purpose to calculate $I(T; x)$, compute next parameters:
        \begin{itemize}
            \item $I(T - v;x) = \Pi_{u \in children[v]} I(T; x)$
            \item $I(T-N[v]; x)$ parameter said that we remove the vertex v with its neighbors so we can use $I(T - v;x)$ parameter of the children:
            \begin{center}
                $I(T - N[v];x) = \Pi_{u \in children[v]} I(T-u; x)$
            \end{center}
            \end{itemize}
        \end{enumerate}
    \end{enumerate}

    Finally use formula \ref{eq:independent-polynomial-rec} to calculate $I(T; x)$.

\section{Main Algorithm}
    The algorithm in FIP (find independence polynomial) function starts at selected root node of a tree and goes as far as it can down a given branch in FIPR (find independence polynomial recursion) function, then backtracks until it finds an unexplored path, and then explores it. The algorithm does this until the entire graph has been explored.\\

    \begin{algorithm}[H]
        \caption{FIP(T)}
        \KwIn{Tree T as adjacency list}
        \KwOut{A list IP that represents the independence polynomial of T }

        \tcp{Two base cases:}
        \If{$|V| == 1$}{\Return{[1,1]}}
        \If{$|V| == 2$}{\Return{[2,1]}}

        \tcp{Select a root vertex that not a leaf:}
        $root \gets findInnerVertex()$\\

        \tcp{Call the recursive function:}
        FIPR(T, root)\\

        \tcp{Return the independence polynomial of $T$:}
        \Return {$I_of_T(root)$}
    \end{algorithm}

    \begin{algorithm}[H]
        \caption{FIPR(T, root)}
        \KwIn{Tree T as adjacency list, root vertex}
        \KwOut{Three lists: I(T; x), I(T-v; x), I(T-N[v]; x)}

        \tcp{Stop condition:}
        \uIf{root is a leaf}{
            I\_of\_T\_minus\_v(root) $\gets$ [1]\\
            I\_of\_T\_minus\_Nv(root) $\gets$ [1]\\
            I\_of\_T(root) $\gets$ [1, 1]\\
        }
        \Else{
            \tcp{Explore undiscovered vertices that are root-neighbors:}
            \ForEach{vertex u in N[root] that never explored}{
                    \tcp{Save the hierarchy:}
                    vertex\_children(root).append(u)\\
                    \tcp{Call again recursively with u:}
                    FIPR(T, u)
            }
        $left \gets [1]$ \tcp{Neutral organ to multiply}
        $right \gets [1,0]$ \tcp{Initialize as x}
        \tcp{Calculate the union of sub-graphs}
        \ForEach{u in vertex\_Children}{
            $left \gets left * I\_of\_T(u)$ \tcp{polynomial multiplication}
            $right \gets right * I\_of\_T\_minus\_v(u)$\\
        }
        \tcp{Set all lists in $root$ index}
        I\_of\_T\_minus\_v(root) $\gets left$\\
        I\_of\_T\_minus\_Nv(root) $\gets right$\\
        I\_of\_T(root) $\gets left + right$  \tcp{polynomial Addition}
        }
    \end{algorithm}

\section{Proof of correctness}

\begin{lemma}
    FIPR(T, root) is called exactly once for each vertex in the graph
\end{lemma}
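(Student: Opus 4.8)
The plan is to argue that `FIPR` is called exactly once per vertex by splitting the claim into two halves: (i) every vertex receives at least one call, and (ii) no vertex receives two calls. The natural framework is induction on the recursion tree, combined with the standard DFS invariant that a vertex is "explored" (marked) precisely when its `FIPR` invocation begins. Throughout, I would treat the tree $T$ as rooted at the vertex returned by `findInnerVertex()` in the case $|V|>2$, and note that the two base cases $|V|\in\{1,2\}$ return before `FIPR` is ever invoked, so there is nothing to prove there; hence assume $|V|>2$.

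For the "at most once" direction, **first I would** make precise the bookkeeping that the algorithm only implicitly describes: maintain that a vertex $u$ is marked as "explored" at the moment `FIPR(T, u)` is entered, and that the \textbf{foreach} loop in `FIPR(T, root)` iterates only over neighbors of \texttt{root} that \emph{were never explored}. Then I would show by induction on the order in which calls are made that whenever `FIPR(T, u)` is invoked, $u$ was unmarked immediately before. The key point is that a call `FIPR(T, u)` can only be generated from inside `FIPR(T, v)` for a neighbor $v$ of $u$; since each such $v$ is a distinct vertex and fires the call to $u$ only if $u$ is still unexplored at that instant, and since $u$ becomes explored at the start of its own call, at most one neighbor can ever trigger the recursion on $u$. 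Together with the fact that the top-level call `FIPR(T, root)` happens exactly once, this yields at most one call per vertex.

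For the "at least once" direction, **I would** induct on the distance from the root in $T$: the root is reached by the explicit top-level call in `FIP`; and if $u$ is reached (so `FIPR(T, u)` runs) and $u$ is not a leaf, then the \textbf{foreach} loop runs over all still-unexplored neighbors of $u$, which — by the already-established uniqueness — includes every neighbor of $u$ that has not yet been visited, in particular every child of $u$ in the rooted tree (a child cannot have been visited earlier, since the only path from the root to a child passes through $u$). Hence every child of a visited non-leaf vertex is visited, and by induction every vertex of $T$ is visited. Combining the two directions gives the lemma.

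**The main obstacle** I anticipate is not conceptual difficulty but the gap between the pseudocode and a rigorous model: the algorithm never explicitly states a global "explored" set, relies on the phrase "that never explored," and `FIPR` returns three lists without obviously persisting them, yet the proof needs the explored-marking to be a global side effect and needs the recursion-tree/DFS correspondence spelled out. So the delicate step is setting up the right invariant — "a vertex is explored iff its `FIPR` call has begun" — and verifying it is maintained across the recursive calls inside the \textbf{foreach} loop; once that invariant is in place, both halves of the argument are short inductions. A secondary subtlety worth a sentence is confirming that `root` itself is never re-entered via the loop of one of its neighbors, which again follows from the marking invariant since `root` is marked before any of its neighbors' calls begin.
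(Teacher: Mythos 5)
Your proposal is correct and follows essentially the same two-part argument as the paper: the ``at most once'' half rests on the invariant that a vertex is marked explored when its call begins (the paper's ``called only if not discovered''), and the ``at least once'' half rests on connectivity and the recursive exploration of unvisited neighbors. You simply make explicit, via the marking invariant and induction on distance from the root, the details the paper leaves implicit.
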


\begin{proof}
    Clearly FIPR(T, root) is called for a vertex u only if it not discovered. The moment it's called,  FIPR(T, root) cannot be called vertex v again. Furthermore, because T is connected component, and FIPR(T, root) uses post order travesal in the implementation, it's ensures that it will be called for every vertex in T.
\end{proof}

\begin{lemma}
    The body of the foreach loop that explore undiscovered vertices that are root-neighbors (lines 6-8) is executed exactly once for each edge (v,u) in the graph.
\end{lemma}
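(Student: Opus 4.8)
The plan is to exhibit a bijection between the executions of the loop body in lines 6--8 and the edges of $T$, factoring it through the non-root vertices of $T$. Fix the root selected in FIP. Since $T$ is a tree, it has exactly $n(T)-1$ edges, and, the root being fixed, every non-root vertex $u$ has a unique \emph{parent} $p(u)$; the assignment $u \mapsto \{u, p(u)\}$ is a bijection from the $n(T)-1$ non-root vertices onto the $n(T)-1$ edges. So it suffices to match loop-body executions with non-root vertices, the intended correspondence being ``the execution inside the call FIPR(T, $v$) that handles a neighbor $u$'' $\longleftrightarrow$ ``the vertex $u$'', and then to identify the $v$ in such an execution with $p(u)$.

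First I would record what the code gives for free, together with the previous lemma. The loop body is the only site at which FIPR is invoked recursively, and one execution of it performs exactly one such call FIPR(T, $u$) on a vertex $u$ that is undiscovered at that instant and that is thereby marked discovered and appended to the children list of the current vertex $v$. By the previous lemma FIPR(T, $w$) is executed exactly once for every vertex $w$; since the initial call made by FIP is on the root, every other vertex $u$ receives its unique call from inside some loop-body execution, and I would \emph{define} $p(u)$ to be the vertex $v$ whose loop body issued that call. I would then check that the map ``loop-body execution handling $u$'' $\mapsto u$ is a bijection onto the non-root vertices: it is injective because within one call the for-each visits each neighbor at most once, and across calls a vertex once discovered never again passes the ``never explored'' guard; it is surjective by the previous lemma, exactly as just described. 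Under this bijection the edge $\{v,u\}=\{u,p(u)\}$ is attached to the execution, and composing with the bijection $u\mapsto\{u,p(u)\}$ shows each edge of $T$ is produced by exactly one loop-body execution, which is the claim.

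The step I expect to be the only real obstacle is making sure that the recursion's discovery tree is all of $T$ and that no edge is processed twice — that is, that the $v$ in ``the execution handling $u$'' really is $p(u)$, and that it is never the case that both endpoints of an edge handle it. This rests on $T$ being connected (already invoked for the previous lemma), so the recursion reaches every vertex, and on $T$ being acyclic, so the depth-first recursion produces no non-tree edges: when FIPR(T, $v$) runs, the only neighbor of $v$ that is already discovered is the one from which $v$ was called, so for each edge $\{v,u\}$ the guard permits exactly one of the two calls FIPR(T, $v$), FIPR(T, $u$) to handle it, namely the one performed inside the parent's loop. Granting this, the bijection above and the edge count $n(T)-1$ follow immediately.
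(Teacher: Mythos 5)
Your proposal is correct and takes essentially the same route as the paper: both rest on the previous lemma (each vertex receives exactly one call to FIPR) together with the ``never explored'' guard to charge each edge to exactly one loop-body execution. The paper's own proof is only two sentences and leaves the key details implicit, whereas you supply them explicitly --- the parent bijection $u \mapsto \{u,p(u)\}$ onto the $n(T)-1$ edges and the appeal to acyclicity to rule out an edge being handled from both endpoints --- so your argument is a fleshed-out version of the same idea rather than a different one.
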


\begin{proof}
    FIPR(T, root) is called exactly once for each vertex root (Claim 1). And the body of the loop is executed once for all the unseen edges that connect to root.
\end{proof}

\begin{corollary}
    The algorithm can start from every vertex root such that\\$deg(v) \geq 2$ and get the same runtime.
\end{corollary}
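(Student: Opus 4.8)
The plan is to reduce the statement to the two preceding lemmas together with a single well-definedness observation. First I would note that in a tree (indeed in any simple connected graph on at least three vertices) a vertex has degree at least $2$ if and only if it is not a leaf, so the hypothesis $deg(v) \geq 2$ is exactly the condition under which \emph{findInnerVertex} could legitimately return $v$, and under which the top-level call FIPR$(T, v)$ enters the recursive branch instead of returning the leaf base case $[1,1]$ and halting. Hence, for every such $v$, the execution unfolds precisely as analysed in Claim~1 and Claim~2: every vertex receives exactly one FIPR call and the body of the loop on lines~6--8 is executed exactly once per edge.

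Second, I would split the running time into three contributions and show each depends only on $n(T)$ and $m(T)$: (i) the constant-size bookkeeping done on entry and exit of each FIPR call; (ii) one execution of the loop body of lines~6--8 for each explored edge, each costing a constant plus one recursive call; and (iii) the list operations performed after the loop — the products accumulated into \emph{left} and \emph{right} and the final addition \emph{left} $+$ \emph{right} — whose number is a fixed function of the children of the current vertex. In the uniform-cost accounting used throughout the paper (each vertex visit and each edge traversal charged $O(1)$), contribution (i) is $\Theta(\#\text{FIPR calls})$, contribution (ii) is $\Theta(\#\text{loop-body executions})$, and contribution (iii) is bounded by a constant times the number of (vertex, child) incidences, i.e. again $\Theta(m(T))$. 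Invoking the lemmas: by Claim~1 the number of FIPR invocations is exactly $n(T)$, and by Claim~2 the number of loop-body executions is exactly $m(T) = n(T) - 1$. Neither quantity mentions the identity of the root, only that the starting vertex was a non-leaf; therefore all three contributions, and hence the total runtime, coincide for every root $v$ with $deg(v) \geq 2$, which is the assertion.

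I expect the main obstacle to lie in contribution (iii): the list multiplications and additions at a vertex $w$ manipulate polynomials whose degrees are the independence numbers of the subtree hanging from $w$, and those subtree sizes genuinely change under re-rooting — a path rooted at a centre versus at an endpoint being the extreme case. So the proof must commit explicitly to the uniform-cost model in which a single node update counts as $O(1)$ (the very model under which the paper claims overall linear time), rather than a coefficient- or bit-level count; under the latter the statement would be false. I would therefore state this modelling convention immediately before appealing to Claims~1 and~2, so that ``same runtime'' is unambiguous, and then the corollary follows without further calculation.
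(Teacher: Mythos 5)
Your proposal is correct and follows the same basic route as the paper: the paper's proof simply observes that the post-order traversal ``goes through each edge exactly twice,'' so the runtime is independent of which non-leaf vertex is chosen as the root, which is your contributions (i) and (ii) reduced to Claims~1 and~2. Where you genuinely go beyond the paper is contribution (iii). The paper's proof is silent about the cost of the polynomial arithmetic performed at each vertex, and you are right to flag this as the only place the corollary could actually fail: the lists \emph{left} and \emph{right} have length governed by the independence numbers of the subtrees below the current vertex, and the subtree decomposition does change under re-rooting (your path example is exactly the right one). So under a coefficient-level cost model neither the corollary nor the paper's overall $O(n)$ claim is justified by the two lemmas alone, and your explicit commitment to the uniform-cost convention -- one node update, one list operation, charged $O(1)$ -- is the missing hypothesis that makes both the corollary and the concluding runtime statement precise. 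In short: same skeleton, but your version makes explicit a modelling assumption the paper uses tacitly, and that is a real improvement rather than a digression.
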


\begin{proof}
    The order of walking on the tree is in post order travels. In this way, it go through each edge exactly twice so that is no matter which vertex v $(deg(v) \geq 2)$ it start from, the running time will remain the same.
\end{proof}

Therefore, the running time of the FIPR(T, root) algorithm is $O(n+m)$, and the tested graphs are trees, so it summarise to $O(n)$.\\

\section{Conclusion}
    In this article we have presented a linear algorithm that uses dynamic programming to find an independence polynomial of trees. The algorithm uses a post-order-travel over the graph. when it reaches the leaves it calculates them, and in the backtracking, when all children were calculated, there comes the father turn to compute.

\end{document}